\newcommand{\ovline}[1]{%
  \vbox{%
    \hrule height 0.6pt
    \kern0.22ex
    \hbox{%
      \kern-0.05em
      \ifmmode#1\else\ensuremath{#1}\fi
      \kern0em
    }
  }
}
\def\jobis#1{FF\fi
  \def\predicate{#1}%
  \edef\predicate{\expandafter\strip@prefix\meaning\predicate}%
  \edef\job{\jobname}%
  \ifx\job\predicate
}
\if\jobis{proposal}%
 \numberwithin{equation}{subsection}
 \numberwithin{footnote}{subsection}
 \newtheorem{thm}[subsection]{Theorem}
 \newtheorem{quest}[subsection]{Question}
{
    \newtheoremstyle{upright}%
        {8pt plus2pt minus4pt}%
        {8pt plus2pt minus4pt}%
        {\upshape}%
        {}%
        {\bfseries\scshape}%
        {}%
        {1em}%
        {}%
\theoremstyle{upright}

 \newtheorem{exam}[subsection]{Example}

}
\newcommand{\cl}{\mathcal}
\newcommand{\oo}{\mathscr O}
\newcommand{\BP}{\mathbb P}
\newcommand{\Z}{\mathbb Z}
\newcommand{\gl}{\mathrm{GL}}
\newcommand{\bir}{\mathrm{Bir}}
\begin{document}
\title{A remark on the rank of finite $p$-groups of birational automorphisms}
\author{Jinsong Xu}
\date{}
\maketitle
\abstract{In this short note, we improve a result of Prokhorov and Shramov on the rank of finite $p$-subgroups of the birational automorphism group of a rationally connected variety. Known examples show that they are sharp in many cases.}

\section{Introduction}

\noindent We work over an algebraically closed field of characteristic $0$.

In \cite[\S 6]{s}, Serre asked if the following property holds (cf. \cite[Question 1.1]{ps4}):

\begin{quest} \label{serre}
Is there a constant $L(n)$ (depending only on $n$) such that if a prime numebr $p$ is greater than $L(n)$, then any finite $p$-subgroup $G$ of the Cremona group $\mathrm{Cr}_n(k)$ is abelian, and can be generated by at most $n$ elements?
\end{quest}

The question was answered affirmatively by Prokhorov and Shramov in their study of Jordan property of the Cremona groups \cite{ps1} under the assumption of the well-known conjecture (now a theorem, see \cite{bi}) of boundedness of Fano varieties with mild singularities. As a direct consequence of their \emph{almost fixed point property} for finite subgroup actions on rationally connected varieties, they showed more generally that

\begin{thm} \cite[Theorem 1.10]{ps1}
	There is a constant $L = L(n)$ such that for any rationally connected variety $X$ of dimension $n$ defined over an arbitrary (not necessarily algebraically closed) field $k$ of characteristic $0$ and for any prime $p > L$, every finite $p$-subgroup $G \subset \bir(X)$ is an abelian group generated by at most $n$ elements.
\end{thm}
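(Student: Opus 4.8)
The plan is to follow the strategy of Prokhorov and Shramov: reduce to a biregular action, extract an (almost) fixed point, and linearize the action there. First I would replace $X$ by a projective $G$-equivariant birational model on which $G$ acts by biregular automorphisms; such a model exists over any field of characteristic $0$ by $G$-equivariant resolution of the (finitely many) indeterminacy loci of the elements of $G$. Thus we may assume $G\subset\aut(X)$ with $X$ projective, rationally connected, $\dim X=n$. Then, since $X$ is rationally connected, $K_X$ is not pseudo-effective, so I would run a $G$-equivariant MMP; it terminates in a $G$-Mori fiber space $Y\to Z$ with $\dim Z<n$ whose general fiber is a Fano variety with terminal singularities.

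This is exactly the setting in which the almost fixed point property is proved. Invoking Birkar's theorem \cite{bi} on boundedness of Fano varieties with bounded singularities, together with an induction on dimension that controls both the fibers of the Mori fibration and the ($G$-equivariant) resolutions of the klt singularities of $Y$, one obtains a constant $J=J(n)$ and a subgroup $G'\subset G$ with $[G:G']\le J(n)$ admitting a fixed point $x$ on some smooth $G'$-equivariant model of $X$. Now I set $L(n)\ge J(n)$: since $G$ is a $p$-group, $[G:G']$ is a power of $p$ that is at most $J(n)<p$, hence equals $1$, so $G$ itself fixes the smooth point $x$.

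Next I would linearize. By a standard averaging argument (legitimate because $|G|$ is prime to the characteristic), the $G$-action on the formal neighborhood of $x$ is conjugate to a linear one, which yields a faithful representation $G\hookrightarrow\gl(T_xX)\cong\gl_n(k)$. Jordan's theorem for $\gl_n$ supplies a constant $J'(n)$ with the property that $G$ contains a normal abelian subgroup of index $\le J'(n)$; enlarging $L(n)\ge J'(n)$, the $p$-group $G$ must coincide with it, so $G$ is abelian. A finite abelian subgroup of $\gl_n(k)$ of order prime to the characteristic is simultaneously diagonalizable, so $G$ is contained in the diagonal torus $(\gm)^n$; and every finite subgroup of $(\gm)^n$ lies in $\mu_N^n$ for a suitable $N$, hence is generated by at most $n$ elements. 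Taking $L(n)=\max\{J(n),J'(n)\}$ completes the argument; over a non-closed field one notes in addition that largeness of $p$ bounds the degree of the field of definition of the fixed point, which is enough.

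The hardest part, by far, will be the almost fixed point property invoked in the second paragraph. Producing the constant $J(n)$ together with a \emph{smooth} almost-fixed point is where the real geometry lies: it needs the boundedness of Fano varieties, a careful $G$-equivariant MMP, and an induction on dimension handling the fibers of the Mori fibration and the singular fixed loci via rational connectedness of resolutions of klt singularities. By contrast, everything downstream — the Jordan property of $\gl_n$ and the structure of finite subgroups of tori — is elementary.
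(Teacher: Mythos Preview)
The theorem you are asked to prove is not given an independent proof in this paper: it is simply quoted from \cite{ps1}, and your sketch faithfully reconstructs the Prokhorov--Shramov strategy from that reference (equivariant regularization, $G$-MMP, Birkar's boundedness to obtain the almost fixed point property, then linearization and Jordan for $\gl_n$). As a plan for the cited result it is correct, and you rightly flag the almost fixed point property as the deep input.

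What the present paper actually contributes is a \emph{different} route to the same conclusion, carried out in Theorem~\ref{main}, which bypasses the MMP/boundedness machinery entirely. Instead of producing an almost fixed point via BAB, the paper observes that on a smooth projective rationally connected model one has $\chi(\oo_X)=1$, and then applies Haution's fixed point theorem (Theorem~\ref{fixed}(iii)): for $p>\dim X+1$, any $p$-group acting on such an $X$ already has a genuine fixed point, with no bounded-index loss. The linearization step is then the same as yours, but the paper invokes the sharp \cite[Lemma~3.3]{ps4} in place of Jordan's theorem, so no further enlargement of the constant is needed. The payoff is twofold: the argument avoids \cite{bi} altogether, and it yields the explicit, essentially sharp value $L(n)=n+1$, whereas your approach produces only the ineffective constant $\max\{J(n),J'(n)\}$ coming from boundedness of terminal Fanos and Jordan's bound for $\gl_n$.
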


It is interesting to find effective values of the constants $L(n)$. When $n \leq 3$, they were studied a few years earlier for elementary abelian $p$-groups, i.e., $G \simeq (\Z/p\Z)^r$ for some $r$, see \cite{be} and \cite{p2}.

Later, using explicit algebraic geometry of surface and threefolds, it was shown that

\begin{thm} \label{surface}(\cite[Proposition 1.7]{ps4})
	Let $S$ be a rational surface, and let $G \subset \mathrm{Cr}_2(k)$ be a finite $p$-subgroup. Then $G$ is an abelian group generated by at most two elements if $p \geq 5$.
\end{thm}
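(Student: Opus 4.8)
The plan is to replace the given birational action by a biregular one and then run the $G$-equivariant Minimal Model Program. By Weil's regularization theorem we may assume, after replacing $S$ by another smooth projective rational surface in its birational class, that $G\subset\aut(S)$ acts biregularly. Running the $G$-equivariant MMP produces a $G$-equivariant birational morphism onto a $G$-Mori fibre space $S'$, so that $S'$ is either a del Pezzo surface with $\rk\pic(S')^G=1$, or a conic bundle $\pi\colon S'\to\mathbb{P}^1$ with $\rk\pic(S')^G=2$ and an induced $G$-action on the base. It then suffices to bound $G$ inside $\aut(S')$. Throughout I will use the following elementary facts over our algebraically closed field of characteristic $0$: a non-abelian $p$-group has no faithful representation of dimension $<p$; hence, for $p$ odd, a finite $p$-subgroup of $\gl_2$ is abelian and a finite $p$-subgroup of $\pgl_2$ is cyclic (an abelian finite subgroup being conjugate into a maximal torus, which is $\cong\gm$ in $\pgl_2$), while for $p\geq 5$ a finite $p$-subgroup of $\pgl_3$ is abelian of rank $\leq 2$ (lift it isomorphically to $\mathrm{SL}_3$, the kernel of $\mathrm{SL}_3\to\pgl_3$ having order prime to $p$, and diagonalize); and finally, a central extension of a cyclic group is abelian.

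\emph{The conic bundle case} is the main point. Here the image $\bar G$ of $G$ in $\aut(\mathbb{P}^1)=\pgl_2$ is a cyclic $p$-group, and the kernel $G_F$ of $G\to\bar G$ acts fibrewise, hence embeds into the automorphism group of the generic fibre, which is again cyclic. The key claim is that $\pi$ has no singular fibre. Indeed, since $p$ is odd, no element of $G$ of $p$-power order interchanges the two components of a singular fibre that it preserves; combining this with the cyclicity of $\bar G$ and the vanishing of $H^1$ of a cyclic $p$-group with coefficients in $\Z/2\Z$, one can choose one $(-1)$-curve in each singular fibre so that the resulting finite set of disjoint $(-1)$-curves is $G$-invariant, and contracting it gives a nontrivial $G$-equivariant morphism, contradicting $G$-minimality. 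Therefore $S'\cong\mathbb{F}_e$ for some $e\geq 0$ with $e\neq 1$ (the exceptional section being $G$-equivariantly contractible when $e=1$). A finite subgroup of $\aut(\mathbb{F}_e)$ is linearly reductive, hence conjugate into a maximal reductive subgroup $L$, which is $(\pgl_2\times\pgl_2)\rtimes\Z/2\Z$ when $e=0$ and, for $e\geq 2$, an extension $1\to\gm\to L\to\pgl_2\to 1$ with $L\cong\gm\times\pgl_2$ or $L\cong\gl_2$. In each case a finite $p$-subgroup with $p$ odd is abelian and $2$-generated, by the facts above: when $e=0$ it lies in $\pgl_2\times\pgl_2$ (no ruling swap, $p$ being odd) and projects to a cyclic group in each factor; when $L\cong\gl_2$ it is abelian, hence diagonalizable of rank $\leq 2$; when $L\cong\gm\times\pgl_2$ it is a central extension of the cyclic group $\bar G$ by the cyclic central subgroup $G\cap\gm$, hence abelian of rank $\leq 2$.

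\emph{The del Pezzo case} is then a finite verification. If $S'=\mathbb{P}^2$ the statement is one of the elementary facts recalled above. The case $K_{S'}^2=8$ (necessarily $\mathbb{P}^1\times\mathbb{P}^1$ with a ruling swap) does not occur for $p$ odd; if $K_{S'}^2\in\{6,7\}$ then $\aut(S')$ has a maximal torus of rank at most $2$ and component group of order coprime to $p$ for $p\geq 5$, so $G$ is conjugate into the torus and is abelian of rank $\leq 2$. Finally, if $K_{S'}^2\leq 5$ then $\aut(S')$ is finite, and the classification of automorphism groups of del Pezzo surfaces of degree $\leq 5$ shows that for $p\geq 5$ its $p$-Sylow subgroup is cyclic of order at most $7$ — it lies inside $S_5$ in degree $5$, inside the automorphism group of a plane curve of genus $\leq 3$ in degrees $\leq 2$, and is trivial for $p\geq 11$ — so $G$ is cyclic.

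\emph{Expected main obstacle.} The crux, and the least formal step, is the reduction of the conic bundle case to Hirzebruch surfaces: the assertion that a $G$-minimal conic bundle carrying a $p$-group action with $p$ odd has no singular fibres. This is where the hypothesis on $p$ is genuinely used, and it is also why the conclusion fails for $p=2$ and $p=3$, so that the bound $p\geq 5$ is essentially sharp. The del Pezzo part, by contrast, is bookkeeping relying on the classical classification of finite automorphism groups of del Pezzo surfaces (equivalently, of point configurations in $\mathbb{P}^2$ and of plane curves of low degree).
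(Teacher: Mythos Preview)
Your argument follows the classical route of the cited source \cite{ps4}: regularize, run the $G$-equivariant MMP, and handle the resulting $G$-minimal del Pezzo surfaces and conic bundles by classification. The paper takes an entirely different path. It does not reprove this statement in isolation, but its main Theorem~\ref{main} specializes to it when $n=2$: after regularizing to a smooth projective $S$ with $\chi(\oo_S)=1$, Haution's fixed point theorem (Theorem~\ref{fixed}(iii), which applies because $\dim S=2<p-1$ exactly when $p\ge 5$) produces a $G$-fixed point $x$, whence $G\hookrightarrow\gl(T_xS)\cong\gl_2(k)$, and one finishes with the elementary representation-theoretic fact you yourself invoke. Your approach is self-contained for surfaces but leans on the detailed classification of del Pezzo automorphism groups and does not generalize; the paper's approach bypasses all case analysis and works uniformly in every dimension, at the price of importing Haution's theorem as a black box.

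Two small gaps in your bookkeeping are worth noting. Your justification for $K_{S'}^2\le 5$ omits degrees $3$ and $4$, and for degree $1$ the associated branch curve has genus $4$, not $\le 3$; the quickest fix in degree $1$ is that the base point of $|{-K_{S'}}|$ is automatically $G$-fixed, so $G\hookrightarrow\gl_2$ directly. Also, your claim that the conic-bundle reduction is where $p\ge 5$ is ``genuinely used'' and explains the failure at $p=3$ is inaccurate: that reduction needs only $p$ odd, and the failure at $p=3$ occurs on the del Pezzo side (the Fermat cubic surface carries $(\Z/3\Z)^3$).
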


The lower bound $p \geq 5$ in the above theorem is sharp due to existing examples.

\begin{thm} (\cite[Theorem 1.2.4, Corollary 1.2.5]{ps3})
	Let $X$ be a rationally connected threefold, and let $G \subset \bir(X)$ be a finite $p$-group. Then $G$ is an abelian group generated by at most three elements if $p > 10 368$.
\end{thm}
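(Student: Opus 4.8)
The plan is to follow the $G$-equivariant minimal model program strategy of Prokhorov and Shramov and reduce the assertion to the existence of a fixed point. Since every finite subgroup of $\bir(X)$ is realized by a biregular action on a suitable smooth projective model (take any projective model of $k(X)$ carrying the $G$-action and apply equivariant resolution of singularities), one may assume $G$ acts regularly on a smooth projective rationally connected threefold, still denoted $X$. Running the $G$-equivariant MMP produces a $G$-Mori fibre space $f\colon Y\to Z$, i.e.\ a $G$-equivariant morphism with $Y$ a $\Q$-factorial terminal threefold, relative Picard number one, $-K_Y$ $f$-ample, and $\dim Z\in\{0,1,2\}$. It then suffices to show that, for $p$ above the stated bound, $G$ fixes a point on some smooth model of $X$: for then $G$ acts faithfully on the tangent space at that point, so $G\hookrightarrow\mathrm{GL}_3(k)$, and every finite $p$-subgroup of $\mathrm{GL}_3(k)$ with $p>3$ decomposes $k^3$ into $G$-invariant lines (irreducible representations of a $p$-group have $p$-power dimension, hence dimension one here), so $G$ is abelian and generated by at most three elements.

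\textbf{The fibration cases.} If $\dim Z=2$, then $f$ is a $G$-conic bundle over a rational surface $Z$; replacing $Z$ by a relatively minimal $G$-model and applying Theorem~\ref{surface}, the image $\bar G$ of $G$ in $\bir(Z)$ is abelian of rank at most two once $p\ge 5$, while the kernel sits in $\mathrm{PGL}_2(k(Z))$ and so is cyclic for odd $p$. Using fixed-point theory for $p$-groups on rational surfaces, and the fact that the discriminant curve is $\bar G$-invariant, one arranges that $\bar G$ fixes a point $z\in Z$ off the discriminant; then $G$ acts on the smooth conic $Y_z\cong\BP^1_k$, a $p$-subgroup of $\mathrm{PGL}_2(k)$ is cyclic for odd $p$ and fixes a point of $Y_z$, giving a $G$-fixed point of $Y$. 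If $\dim Z=1$, then $Z\cong\BP^1$, the image of $G$ in $\mathrm{PGL}_2(k)$ is cyclic for odd $p$ and fixes some $z\in Z$, and $G$ acts on the fibre $Y_z$, a (possibly singular) del Pezzo surface; here one invokes fixed-point results for $p$-groups acting on del Pezzo surfaces — valid once $p$ exceeds a modest bound coming from their explicit classification — to produce a $G$-fixed point of $Y_z$, hence of $Y$.

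\textbf{The Fano case and the constant.} The remaining case $\dim Z=0$ is the crux: $Y$ is a $G$-Fano threefold with terminal singularities and Picard number one. By boundedness of Fano threefolds (the BAB conjecture, now a theorem, as noted above) such $Y$ vary in a bounded family, and one goes through the possibilities explicitly — bounding the anticanonical degree $(-K_Y)^3$, controlling the finite $p$-subgroups of $\aut(Y)$ and their fixed loci — to conclude that $G$ fixes a point of $Y$ (at worst a cyclic quotient singularity of index prime to $p$, which still yields $G\hookrightarrow\mathrm{GL}_3(k)$ via the local linear model). This is where the explicit threshold $p>10\,368$ is forced, and it is the main obstacle: turning boundedness into a uniform and reasonably small constant needs a substantial amount of case-by-case Fano geometry. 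A secondary technical point is that in the two fibration cases the fibres over the discriminant may be singular or reducible, so some care is required either to select a "good" fibre $G$-equivariantly or to analyse the degenerate fibres directly; it is precisely by sharpening these fixed-point inputs (and the Fano-case bookkeeping) that one should expect to improve the constant.
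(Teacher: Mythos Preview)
The paper does not prove this theorem at all: it is quoted verbatim from \cite[Theorem 1.2.4, Corollary 1.2.5]{ps3} as background, with no argument given. So there is no ``paper's own proof'' to compare your proposal against. Your sketch is a plausible outline of the Prokhorov--Shramov strategy in \cite{ps3} (equivariant MMP, reduction to a $G$-Mori fibre space, case split on $\dim Z$, explicit Fano threefold bookkeeping for the constant), but none of that appears in the present note.

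What the present paper actually does is orthogonal to your outline: its main theorem (Theorem~\ref{main}) supersedes this cited result by a completely different and much shorter route. After passing to a smooth projective regularization on which $G$ acts biregularly, one uses $\chi(\oo_X)=1$ for smooth projective rationally connected $X$ together with Haution's fixed point theorem (Theorem~\ref{fixed}(iii)) to get a $G$-fixed point directly whenever $p>\dim X+1$, then concludes via the faithful tangent-space representation. No MMP, no Mori fibre space trichotomy, no Fano boundedness, and the constant drops from $10\,368$ to $n+1$. So while your proposal is not wrong as a summary of \cite{ps3}, it is not what this paper does, and the paper's whole point is that this machinery can be bypassed.
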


The lower bound in this case was then substantially refined in \cite{ps4}:
\begin{thm} \cite[Theorem 1.5]{ps4})
	Let $X$ be a rationally connected threefold and let $G \subset \bir(X)$ be a $p$-subgroup. Then $G$ is an abelian group generated by at most three elements if $p \geq 17$.
\end{thm}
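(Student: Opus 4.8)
The plan is to run the equivariant Minimal Model Program, reduce to Mori fibre spaces over a base of dimension at most two, and feed the two‑dimensional data into Theorem \ref{surface}. After equivariant regularization of the birational $G$‑action we may assume $G$ is a finite $p$‑group acting biregularly on a smooth projective rationally connected threefold $X$. Since $X$ is uniruled, $K_X$ is not pseudo‑effective, so a $G$‑equivariant MMP terminates with a $G$‑Mori fibre space $f\colon X'\to Z$, where $X'$ has $\Q$‑factorial terminal singularities, $G$ acts biregularly on $X'$ and on $Z$, $-K_{X'}$ is $f$‑ample, the $G$‑invariant relative Picard number is $1$, and $\dim Z\in\{0,1,2\}$; moreover $Z$ is rationally connected since $X'$ is. Replacing $X$ by $X'$, it suffices to bound $d(G)$, the minimal number of generators, by $3$ and to show $G$ is abelian, in each of the three cases.

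If $\dim Z=1$ then $Z\cong\BP^1$ and $f$ is a $G$‑del Pezzo fibration; if $\dim Z=2$ then $Z$ is a rational surface and $f$ is a $G$‑conic bundle. Restriction to the generic fibre gives an exact sequence $1\to G_\eta\to G\to\bar G\to 1$. When $\dim Z=2$ we have $\bar G\subset\bir(Z)=\mathrm{Cr}_2(\C)$, hence $\bar G$ is abelian with $d(\bar G)\le 2$ by Theorem \ref{surface}, while $G_\eta$ acts on a smooth conic over $\C(Z)$, so embeds into $\pgl_2$ over an algebraically closed field of characteristic $0$ and, $p$ being odd, is cyclic. When $\dim Z=1$ we have $\bar G\subset\pgl_2(\C)$ cyclic, and $G_\eta$ acts on the geometrically rational surface $X_\eta$ over $K=\C(\BP^1)$, so $G_\eta\hookrightarrow\aut(X_{\bar K})\subset\mathrm{Cr}_2(\bar K)$ is abelian with $d(G_\eta)\le 2$ by Theorem \ref{surface}. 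Either way $d(G)\le d(G_\eta)+d(\bar G)\le 3$. To upgrade this to ``$G$ abelian'' I would pass to the fibre $X_z$ over a point $z\in Z$ fixed by $\bar G$ (which exists on a suitable model, since an abelian $p$‑subgroup of $\mathrm{Cr}_2(\C)$, resp.\ of $\pgl_2(\C)$, has a fixed point on some rational surface, resp.\ on $\BP^1$): if $X_z$ is a rational surface then $G\subset\bir(X_z)=\mathrm{Cr}_2(\C)$ and Theorem \ref{surface} applies once more; the remaining reducible or non‑normal fibres have to be handled by hand, the one genuinely delicate point being a subgroup of $G$ acting trivially on a preserved divisor, which however is forced to be cyclic by the character on the normal direction.

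If $\dim Z=0$ then $X$ is a terminal $\Q$‑factorial Fano threefold with $(\pic X)^G$ of rank $1$. By boundedness of terminal Fano threefolds \cite{bi}, $\rho(X)$ is bounded, so for $p$ large $G$ acts trivially on $\pic(X)$ (a finite $p$‑subgroup of $\gl_m(\Z)$ is trivial once $p>m+1$), whence $\rho(X)=1$. Boundedness again yields a uniform $G$‑equivariant embedding $X\hookrightarrow\BP^N$ via a fixed multiple of the ample generator of $\pic(X)$, with $N$ bounded by an explicit constant — for smooth $X$ one may take $N\le 13$, the extreme case being the anticanonically embedded genus‑$12$ threefold $V_{22}\subset\BP^{13}$. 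Since a non‑abelian finite $p$‑group contains a non‑abelian subgroup of order $p^3$ and hence admits no faithful projective representation of degree $<p$, the hypothesis $p\ge 17>N+1$ forces $G\subset\pgl_{N+1}(\C)$ to be abelian, hence diagonalizable, hence of rank $\le N$. To cut this down to $d(G)\le 3$ one produces a $G$‑fixed point $x$ on a suitable model of $X$ and linearizes there: at a smooth fixed point $G\hookrightarrow\gl(T_xX)=\gl_3(\C)$, and $3<5\le p$ forces $d(G)\le 3$; at a singular fixed point (necessarily terminal, a cyclic quotient of an isolated cDV point) one passes to the cDV cover — the coprime extension splitting once $p$ exceeds the bounded Gorenstein index — so that $G\subset\gl_4(\C)$ linearly preserves a hypersurface germ of multiplicity two, and the relation among the weights imposed by the semi‑invariant equation (whose degree‑two part has exponents $<p$) again yields $d(G)\le 3$.

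The reductions to Mori fibre spaces and all the rank estimates above require only $p\ge 5$ (with $p$ odd for the $\pgl_2$‑parts); the explicit value $17$ enters essentially through the Fano case — as the least prime exceeding the boundedness constants for terminal $\Q$‑Fano threefolds of Picard number $1$ (embedding dimension, Picard number, Gorenstein index), the embedding part being governed by $V_{22}$ in the smooth case. I expect this Fano case to be the main obstacle: making the almost‑fixed‑point step airtight — finding a $G$‑fixed point on a suitable $G$‑birational model rather than merely a common eigenvector in the ambient $\BP^N$, and controlling the fixed loci of cyclic subgroups, which need not be rationally connected — and pinning down the boundedness constants uniformly for \emph{singular} $\Q$‑Fano threefolds. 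Establishing that $G$ is abelian in the fibred cases is then comparatively soft, obtained as above from a fixed fibre together with the cyclicity of kernels acting trivially on preserved divisors.
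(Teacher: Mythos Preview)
The paper does not prove this theorem; it is quoted from \cite[Theorem 1.5]{ps4} as background. Your sketch follows, in outline, the original strategy of Prokhorov and Shramov in \cite{ps4}: regularize, run the $G$-equivariant MMP, and analyze the three types of $G$-Mori fibre spaces, with the $\Q$-Fano case supplying the constant $17$ via explicit boundedness data for terminal $\Q$-Fano threefolds. As you yourself acknowledge, the Fano case is only a sketch here; making it precise --- producing an actual $G$-fixed point on a suitable model rather than just a common eigenvector in an ambient $\BP^N$, and handling singular fixed points uniformly --- is essentially the substance of \cite{ps4}.

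What the present paper actually does is supersede this statement entirely: Theorem~\ref{main} gives the stronger bound $p>n+1$ (so $p\geq 5$ for threefolds) with a one-paragraph proof whose mechanism is completely different from yours. There is no MMP at all. After regularization one has $\chi(\oo_X)=1$ for a smooth projective rationally connected $X$; since $p\nmid 1$, Haution's fixed point theorem (Theorem~\ref{fixed}(iii), applicable once $\dim X<p-1$) yields a $G$-fixed point $x$ directly, and the faithful linearization $G\hookrightarrow\gl(T_xX)\cong\gl_n(k)$ forces $G$ abelian of rank $\leq n$ by \cite[Lemma 3.3]{ps4}. This bypasses boundedness of Fanos, the fibration case analysis, and every one of the explicit constants (embedding dimension, Picard rank, Gorenstein index) that push the threshold up to $17$ in the \cite{ps4} argument you are reconstructing. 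Conversely, what your route retains is finer geometric information about the $G$-Mori fibre structure and, in principle, leverage at the small primes $p\leq n+1$ where Haution's theorem does not apply.
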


The goal of this note is to answer Question \ref{serre} in every dimension without using boundedness of Fano varieties, and to further improve effective values of $L(n)$. Indeed, we will show that $L(n) \leq n+1$.

\section{Main theorem}
Our main tool is a remarkable fixed point theorem of Olivier Haution. In this theorem, the field $k$ is algebraically closed of arbitrary characteristic. 

\begin{thm} \label{fixed} \cite[Theorem 1.2.1]{h}
	Let $X$ be a projective variety with an action of a finite $p$-group $G$. Assume that one of the following conditions holds:
	
	(i) $G$ is cyclic;
	
	(ii) char $k = p$;
	
	(iii) $\dim X < p - 1$.

\noindent Then $X(k)^G = \varnothing$ if and only if the Euler characteristic $\chi(X, \cl{F})$ of every $G$-equivariant coherent $\oo_X$-module $\cl{F}$ is divisible by $p$.
\end{thm}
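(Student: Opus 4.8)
Since $X$ is projective all Euler characteristics in sight are finite, and the ``only if'' direction in the presence of a fixed point is trivial: if $x\in X(k)^G$ then the structure sheaf $\oo_x$ of $\{x\}$ carries its canonical $G$-linearization and $\chi(X,\oo_x)=1$, which is not divisible by $p$. So assume $X(k)^G=\varnothing$; the task is to show $p\mid\chi(X,\mathcal F)$ for every $G$-equivariant coherent sheaf $\mathcal F$. I would first run the usual dévissage: in the Grothendieck group of $G$-equivariant coherent sheaves one filters by dimension of support and reduces to $\mathcal F$ supported on a single $G$-orbit of an integral closed subscheme $Z$. If that orbit has more than one element its length is a power of $p$ exceeding $1$ and $\chi$ is accordingly divisible by $p$, whereas if $Z\subsetneq X$ is $G$-stable one invokes induction on $\dim X$ (note $Z^G\subseteq X^G=\varnothing$ and that each of (i)--(iii) passes to $Z$). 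This leaves the case $X$ integral with $G$ acting faithfully, and in case (ii) a further isotypic reduction brings us to $\mathcal F=\oo_X$ with its canonical linearization.

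Two cases are then fairly direct. If $G$ acts freely then $X\to X/G$ is étale of degree $|G|$ and $\chi(X,\mathcal F)=\sum_{\chi}\chi\bigl(X/G,\overline{\mathcal F}\otimes L_\chi\bigr)$, where $L_\chi$ ranges over the torsion line bundles attached to the cover; each $L_\chi$ is torsion in $\pic$, so (using nilpotence of the topological filtration on $K^0$) $\chi(X/G,\overline{\mathcal F}\otimes L_\chi)=\chi(X/G,\overline{\mathcal F})$, and the sum equals $|G|\cdot\chi(X/G,\overline{\mathcal F})$. If $G$ is cyclic but does not act freely, let $H\subset G$ be the unique subgroup of order $p$; then $X^H\ne\varnothing$, it is $G$-stable, and $(X^H)^{G/H}=X^G=\varnothing$, so by induction on $|G|$ every equivariant Euler characteristic on $X^H$ is divisible by $p$. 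Writing $\tau\colon X\to X/H$ for the quotient and decomposing $\tau_*\mathcal F=\bigoplus_{\chi\in\widehat H}(\tau_*\mathcal F)_\chi$ into $H$-isotypic pieces, one has $\chi(X,\mathcal F)=\sum_\chi\chi\bigl(X/H,(\tau_*\mathcal F)_\chi\bigr)$; on the locus where $H$ acts freely the $\chi$-summands differ from the trivial one only by twists by torsion line bundles, which do not affect $\chi$, so $(\tau_*\mathcal F)_\chi-(\tau_*\mathcal F)_{\mathrm{triv}}$ is, up to such twists, supported on $X^H$, where (as $G$ is abelian) it is $G/H$-equivariant; by the inductive hypothesis its Euler characteristic vanishes mod $p$, so all $p$ summands agree mod $p$ and $\chi(X,\mathcal F)\equiv p\cdot\chi(X/H,(\tau_*\mathcal F)_{\mathrm{triv}})\equiv0\pmod p$.

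In characteristic $p$ I would argue along the same lines but with a nilpotent filtration in place of an eigenspace decomposition. Choose $\sigma$ of order $p$ in the centre of $G$ and set $s=\sigma-1$, a nilpotent operator on $\oo_X$ with $s^p=0$. The filtration $\oo_X\supseteq s\oo_X\supseteq\cdots\supseteq s^p\oo_X=0$ has $p$ graded pieces, on each of which $\sigma$ acts trivially, so each is $G/\langle\sigma\rangle$-equivariant. By the normal basis theorem $\oo_X$ is generically free as an $\oo_X[s]/(s^p)$-module, so multiplication by $s$ identifies consecutive graded pieces away from a proper closed $G$-subscheme $W$ with $W^G=\varnothing$; the kernels are equivariant sheaves supported on $W$, whose Euler characteristics vanish mod $p$ by the dimension induction. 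Hence all $p$ graded pieces have the same Euler characteristic mod $p$, and $\chi(X,\oo_X)\equiv p\cdot(\cdots)\equiv0\pmod p$.

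The genuinely hard case is (iii): $G$ an arbitrary $p$-group in characteristic $0$ with $\dim X<p-1$, where it can happen that \emph{every} nontrivial element of $G$ has a nonempty fixed locus --- already $(\Z/2)^2$ acting on $\BP^1$ does this --- so one cannot isolate a fixed-point-free element and the ``free'' argument is unavailable. First take a $G$-equivariant resolution $\pi\colon\widetilde X\to X$; by the projection formula $\chi(X,\oo_X)\equiv\chi(\widetilde X,\oo_{\widetilde X})\pmod p$, the discrepancy involving $R^i\pi_*\oo_{\widetilde X}$ $(i>0)$, supported on $\sing(X)\subsetneq X$ and hence negligible mod $p$ by the dimension induction, so we may take $X=\widetilde X$ smooth. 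Now fix a central $H\cong\Z/p$ with generator $\sigma$ and apply the holomorphic Lefschetz fixed-point formula: $\operatorname{tr}\bigl(\sigma\mid\chi^G(\widetilde X,\oo_{\widetilde X})\bigr)$ is a sum of local contributions over the components of $\widetilde X^\sigma=\widetilde X^H$, which is $G$-stable (as $\sigma$ is central) with $(\widetilde X^H)^{G/H}=\varnothing$; combined with the elementary fact that a virtual character $\psi$ of $\Z/p$ with $\psi(\sigma)\equiv0$ in $\Z[\zeta_p]/p$ satisfies $p\mid\psi(e)$, it suffices to see those local contributions are $\equiv0\pmod p$. This is exactly where $\dim X<p-1$ must enter: along any component $Z$ of $\widetilde X^\sigma$ the nontrivial weights of $\sigma$ on the normal bundle number at most $\codim_{\widetilde X}Z\le\dim X<p-1$, so the Euler-class denominator $\prod_j(1-\zeta_p^{w_j})$ involves strictly fewer than $p-1$ of the conjugates $(1-\zeta_p^{w})$ and hence has $(1-\zeta_p)$-adic valuation strictly below that of $p$ --- leaving precisely the room needed to clear denominators compatibly with reduction mod $p$ and to identify the resulting terms with equivariant Euler characteristics of $G/H$-equivariant sheaves on the lower-dimensional $\widetilde X^H$, which vanish mod $p$ by induction on $|G|$. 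Making this denominator-clearing and the attendant inductive bookkeeping precise is the main obstacle, and is presumably where Haution's argument does its essential work.
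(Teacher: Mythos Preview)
The paper does not contain a proof of this statement: Theorem~\ref{fixed} is quoted verbatim from Haution \cite[Theorem 1.2.1]{h} and is used as a black box in the proof of Theorem~\ref{main}. There is therefore nothing in the paper to compare your proposal against.

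For what it is worth, your sketch follows the expected shape of such an argument (d\'evissage to integral $G$-stable supports, induction on $|G|$ and on $\dim X$, isotypic/filtration decompositions for the quotient by a central order-$p$ subgroup, and a Lefschetz-type localisation in case~(iii)), and you correctly identify the crux of case~(iii): the bound $\dim X<p-1$ is exactly what controls the $(1-\zeta_p)$-adic valuation of the Euler-class denominators so that the local contributions can be cleared and fed into the induction. A minor slip: what you call the ``only if'' direction is in fact the ``if'' direction of the biconditional as stated (equivalently, the contrapositive: a fixed point gives a skyscraper sheaf with $\chi=1$). If you want to verify your outline against the actual proof, you should consult \cite{h} directly; the present paper simply invokes the result.
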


Observe that the structure sheaf $\oo_X$ is $G$-equivariant. This leads to our main theorem:

\begin{thm} \label{main}
	Let $X$ be a rationally connected variety of dimension $n$. If $G \subset \bir(X)$ is a finite $p$-subgroup and $p > n +1$, then $G$ is abelian and the rank of $G$ is at most $n$.
\end{thm}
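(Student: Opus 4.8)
\emph{Proof plan.} The strategy is to reduce the statement to an elementary question about a linear action at a fixed point, feeding the rationally connected variety into Haution's fixed point theorem via its structure sheaf. First I would replace the birational $G$-action on $X$ by a biregular one: by standard regularization of birational group actions in characteristic $0$ (equivariant resolution of singularities applied to a $G$-equivariant compactification of a suitable model), there is a smooth \emph{projective} variety $Y$ with $\dim Y = n$, a faithful action $G \hookrightarrow \aut(Y)$, and a $G$-equivariant birational map $Y \dashrightarrow X$. Since rational connectedness is a birational invariant, $Y$ is rationally connected, so $H^i(Y,\oo_Y)=0$ for $i>0$ and hence $\chi(Y,\oo_Y)=1$.

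The second step produces a $G$-fixed point. The sheaf $\oo_Y$ is $G$-equivariant and its Euler characteristic $1$ is coprime to $p$; moreover $p>n+1$ forces $\dim Y = n < p-1$, so hypothesis (iii) of Theorem \ref{fixed} is met and the theorem gives $Y(k)^G \neq \varnothing$. Choosing $y \in Y(k)^G$, I would then pass to the induced representation $\rho\colon G \to \gl(T_yY) = \gl_n(k)$ and check that it is faithful: if $g\in G$ acts trivially on $T_yY$, then it acts trivially on $\mathfrak{m}_y/\mathfrak{m}_y^2$, and since $g$ has finite order and $\mathrm{char}\,k=0$, the induced automorphism of $\widehat{\oo}_{Y,y}\cong k[[t_1,\dots,t_n]]$ must be the identity (automorphisms congruent to the identity modulo $\mathfrak{m}^2$ form a torsion-free group); hence $g$ is trivial on $k(Y)\cong k(X)$, so $g=1$ in $\bir(X)$.

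It then remains to study a faithful $n$-dimensional representation of the finite $p$-group $G$ with $n < p$. Decomposing $k^n$ into irreducible $G$-subrepresentations and using that every irreducible representation of a $p$-group has dimension a power of $p$, each summand is forced to be $1$-dimensional since its dimension is at most $n<p$; therefore $\rho(G)$ is conjugate into the diagonal torus $(k^*)^n$, so $G$ is abelian. Projecting $G \hookrightarrow (k^*)^n$ onto each of the $n$ coordinate characters realizes $G$ as a subgroup of a product of $n$ cyclic groups, so $G$ is generated by at most $n$ elements, i.e.\ its rank is at most $n$.

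I do not expect a genuine obstacle: the only step carrying real content beyond invoking Theorem \ref{fixed} is the regularization on a \emph{projective} smooth model, which is where equivariant resolution of singularities enters and where one must take care to retain projectivity; the vanishing $\chi(Y,\oo_Y)=1$, the faithfulness of $\rho$, and the representation-theoretic conclusion are all routine. The hypothesis $p>n+1$ is used exactly to guarantee $\dim Y < p-1$ in case (iii) of Haution's theorem, while the weaker inequality $p>n$ already suffices for the final representation-theoretic step.
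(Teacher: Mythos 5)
Your proposal is correct and follows essentially the same route as the paper: regularize to a smooth projective model, use $\chi(\oo_X)=1$ for rationally connected varieties together with case (iii) of Haution's theorem to produce a fixed point, and then analyze the faithful tangent-space representation. The only difference is that you spell out the regularization, the faithfulness of the tangent action, and the diagonalization argument explicitly, whereas the paper delegates these to citations (\cite[Lemma-Definition 3.1]{ps2} and \cite[Lemma 3.3]{ps4}).
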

\begin{proof}
	Passing to a smooth regularization of $G$ \cite[Lemma-Definition 3.1]{ps2}, we may assume that $X$ is nonsingular and projective, and the group $G$ acts biregularly on $X$. It is well known that for a nonsingular projective rationally connected variety $X$, one has $\chi(\oo_X) = 1$. Therefore by Theorem \ref{fixed} $(iii)$, $G$ has a fixed point $x \in X$ as long as $p > n+1$. The action of $G$ on the Zariski tangent space $T_{x,X} \simeq k^n$ is faithful so that $G$ embeds into the general linear group $\gl(n, k)$. Using \cite[Lemma 3.3]{ps4}, we conclude that $G$ is abelian of rank $\leq n$.
\end{proof}

\begin{exam} \label{example}
	(1) The rank $r$ of an elementary $p$-subgroup of $\bir(X)$ can be greater than $n$ if $p \leq n +1$. For rational surfaces, it is known that $r \leq 4$ if $p = 2$, and $r \leq 3$ if $p = 3$. Both of these upper bounds are sharp, see \cite{be}. For rationally connected threefolds, it is known that $r \leq 6$ if $p = 2$ \cite[Theorem 1.2]{p1}, which is sharp. If $p = 3$, the product of the Fermat cubic surface with $\BP^1$ gives an example showing that $(\Z/3\Z)^4$ embeds into $\mathrm{Cr}_3(k)$. Therefore we can take $L(3) = 3$, which is sharp in dimension $3$.
	
	We also know that $r \leq 5$ when $p = 3$ \cite[Theorem 1.2]{p2}, but it is not clear whether $r = 5$ can be reached.

	(2) The lower bound $p > n+1$ in Theorem \ref{main} is \emph{sharp} when $n+1$ is a \emph{prime}: the Fermat hypersurface $X \subset \BP^{n+1}$ of degree $n+1$:
	$$
		x_0^{n+1} + x_1^{n+1} + \cdots + x_{n+1}^{n+1} = 0.
	$$
	is Fano and hence rationally connected, it admits a faithful action of $(\Z/(n+1)\Z)^{n+1}$, an abelian group of rank $n+1$.
\end{exam}

\textbf{Acknowledgement.} This note was written during the author's visit to Laboratory of Algebraic Geometry, HSE University in Dec 2019. The author would like to thank Constantin Shramov for his invitation and valuable suggestions on Example \ref{example}. The author also thanks Yuri Prokhorov for useful discussion.


\noindent Department of Mathematical Sciences,\\
\noindent Xi'an Jiaotong-Liverpool University,\\
\noindent No.111 Ren'ai Road, SIP, Suzhou, Jiangsu Province, China\\
\noindent e-mail: jinsong.xu@xjtlu.edu.cn

\begin{thebibliography}{100}\setlength{\itemsep}{-1mm}
\bibitem{be} A. Beauville; $p$-Elementary subgroups of the Cremona group; J. Algebra 314 (2007) 553-564.

\bibitem{bi} C. Birkar; Singularities of linear systems and boundedness of Fano varieties; arXiv:1609.05543

\bibitem{h} O. Haution; Fixed point theorems involving numerical invariants; Compositio Math. 155 (2019) 260-288.

\bibitem{p1}Yu. Prokhorov; $2$-Elementary Subgroups of the Space Cremona Group; in \emph{Automorphisms in Birational and Affine Geometry}, 215-229, Springer Proceedings in Mathematics \& Statistics, vol.79. Springer.

\bibitem{p2} Yu. Prokhorov; $p$-elementary subgroups of the Cremona group of rank $3$; in \emph{Classification of algebraic varieties}, EMS Ser. Congr. Rep., 327-338. Eur. Math. Soc., Z\"urich, 2011.

\bibitem{ps1} Yu. I. Prokhorov, C. Shramov; Jordan property for Cremona groups; Amer. J. Math., 138 (2), 403-418, 2016.

\bibitem{ps2} Yu. I. Prokhorov, C. Shramov; Jordan property for groups of birational selfmaps; Compositio Math. 150 (2014) 2054-2072.

\bibitem{ps3} Yu. I. Prokhorov, C. Shramov; Jordan constant for Cremona group of rank $3$; Mosc. Math. J., 17:3 (2017), 457-509.

\bibitem{ps4} Yu. I. Prokhorov, C. Shramov; $p$-subgroups in the space Cremona group; Mathematische Nachrichten. 2017; 1-16.

\bibitem{s} J. -P. Serre; A Minkowski-style bound for the orders of the finite subgroups of the Cremona group of rank $2$ over an arbitrary field, Mosc. Math. J. 9 (2009), no. 1, 193-208.
\end{thebibliography}
\end{document}